\def\NAT@def@citea{\def\@citea{\NAT@separator}}
\theoremstyle{plain}
\newtheorem{theorem}{Theorem}[section]
\newtheorem{lemma}[theorem]{Lemma}
\newtheorem{corollary}[theorem]{Corollary}
\theoremstyle{definition}
\newtheorem{definition}[theorem]{Definition}
\newtheorem{example}[theorem]{Example}
\theoremstyle{remark}
\newtheorem{remark}{Remark}
\newcommand{\al}{\alpha}
\newcommand{\be}{\beta}
\newcommand{\ga}{\gamma}
\newcommand{\de}{\delta}
\newcommand{\eps}{\varepsilon}
\newcommand{\bx}{\bar x}
\newcommand{\by}{\bar y}
\newcommand {\R} {\mathbb R}
\newcommand {\N} {\mathbb N}
\newcommand {\B} {\mathbb B}
\newcommand {\gph} {{\rm gph}\,}
\newcommand {\cl} {{\rm cl}\,}
\newcommand {\Int} {{\rm int}\,}
\newcommand{\toto}{\rightrightarrows}
\newcommand{\folgt}{$ \Rightarrow\ $}
\def\es{\emptyset}
\def\RHS{right-hand side}
\def\SVM{set-valued mapping}
\def\EVP{Ekeland variational principle}
\def\Fr{Fr\'echet}
\newcommand{\norm}[1]{\left\Vert#1\right\Vert}
\newcommand{\ang}[1]{\left\langle #1 \right\rangle}
\newcommand{\qdtx}[1]{\quad\mbox{#1}\quad}
\newcommand{\AND}{\quad\mbox{and}\quad}
\newcounter{mycount}
\begin{document}
	
\title{Extremality of families of sets}

\author{
	\name{Nguyen Duy Cuong\textsuperscript{a}, Alexander Y. Kruger\textsuperscript{b}, and Nguyen Hieu Thao\textsuperscript{c}}
	\thanks{Corresponding author: Alexander Y. Kruger. Email: alexanderkruger@tdtu.edu.vn}
	\thanks{Dedicated to the memory of Prof. Diethard Pallaschke, a great scholar and good friend}
	\affil{\textsuperscript{a} Department of Mathematics, College of Natural Sciences, Can Tho University, Can Tho, Vietnam, ORCID: 0000-0003-2579-3601\\
		\textsuperscript{b} Optimization Research Group,
		Faculty of Mathematics and Statistics,
		Ton Duc Thang University,
		Ho Chi Minh City, Vietnam, ORCID: 0000-0002-7861-7380\\
\textsuperscript{c} School of Science, Engineering and Technology, RMIT University Vietnam, Ho Chi Minh City, Vietnam, ORCID: 0000-0002-1455-9169}
}
\maketitle

\begin{abstract}
The paper proposes another extension of the
\emph{extremal principle}.
A new extremality model involving collections of arbitrary families of sets is studied.
It generalizes the conventional model based on linear translations of given sets as well as its set-valued extensions.
This approach leads to a more general and simpler version of \emph{fuzzy separation}.
The new model is capable of treating a wider range of optimization and variational problems.
\end{abstract}

\begin{keywords}
extremal principle; stationarity; separation;  optimality conditions
\end{keywords}

\begin{amscode}
	49J52; 49J53; 49K40; 90C30; 90C46
\end{amscode}


\section{Introduction and preliminaries}

The paper proposes another extension of the
\emph{extremal principle}, which was introduced more than 40 years ago in Kruger \& Mordukhovich \cite{KruMor80} (under the name \emph{generalized Euler equation}) as a variational counterpart of the
convex separation theorem in nonconvex settings and
has proved to be one of the fundamental results of variational analysis.
It serves as a powerful tool for proving necessary optimality conditions, subdifferential, normal cone and coderivative calculus formulas as well as many other results.
We refer the readers to the books \cite{Mor06.1,Mor06.2} and the bibliography therein for a comprehensive exposition of the history, motivations and various applications of the extremal principle.

The key point of the extremal principle is the geometric concept of \emph{extremal collection of sets}, which embraces various notions of optimality in extremal problems and is applicable in many other situations.
In the next definition and throughout the paper we consider a collection of $n>1$ arbitrary nonempty subsets $\Omega_1,\ldots,\Omega_n$ of a normed space and write $\{\Omega_1,\ldots,\Omega_n\}$ to denote the collection of sets as a single object.

\begin{definition}
[Extremality]
\label{D1.1}
The collection $\{\Omega_1,\ldots,\Omega_n\}$
is extremal at $\bx\in\cap_{i=1}^n\Omega_i$ if there exists a $\rho\in(0,+\infty]$ such that,
for any $\varepsilon>0$, there exist vectors $a_i$ $(i=1,\ldots,n)$ such that
$\max_{1\le i\le n}\|a_i\|<\varepsilon$ and
$\bigcap_{i=1}^n(\Omega_i-a_i)\cap {B_\rho(\bx)=\emptyset}$.
\end{definition}

The above definition (as well as some of its extensions below) covers both local ($\rho<+\infty$) and global ($\rho=+\infty$) extremality.
In the latter case, the point $\bx$ plays no role apart from ensuring that $\cap_{i=1}^n\Omega_i\ne\es$.

The dual counterpart of the extremality property in the next assertion (it is not a theorem!) constitutes a kind of fuzzy separation.
It employs \emph{\Fr\ normal cones}.
\medskip

\noindent
\textbf{Extremal principle}
\emph{Let $\Omega_1,\ldots,\Omega_n$
be closed, and $\bx\in\cap_{i=1}^n\Omega_i$.
If $\{\Omega_1,\ldots,\Omega_n\}$ is extremal at $\bx$, then, for any $\varepsilon>0$, there exist $x_i\in \Omega_i\cap B_{\varepsilon}(\bx)$ and $x_i^*\in N^F_{\Omega_i}(x_i)$ $(i=1,\ldots,n)$ such that
$\|\sum_{i=1}^nx_i^*\|<\varepsilon$ and $\sum_{i=1}^n\|x_i^*\|=1$.}
\medskip

The extremal principle was first established in  \cite{KruMor80} under the assumption that the space admits an equivalent norm,
\Fr\ differentiable away from zero, and the fuzzy separation was formulated using, instead of \Fr\ normal cones, their certain enlargements called \emph{sets of \Fr\ $\eps$-normals}.
Employing the \emph{fuzzy sum rule} for \Fr\ subdifferentials established in Fabian \cite{Fab89}, the extremal principle was extended in Mordukhovich \& Shao \cite{MorSha96} (where to the best of our knowledge the name \emph{extremal principle} first appeared) to Asplund spaces, and the $\eps$-normals in its statement were replaced by the conventional \Fr\ normals.
It was also shown in \cite{MorSha96} that the extremal principle in terms of \Fr\ normals cannot be extended beyond Asplund spaces.

\begin{theorem}
[Extremal characterization of Asplund spaces]
\label{T1.1}
The extremal principle holds true for any collection of closed subsets of a Banach space if and only if the space is Asplund.
\end{theorem}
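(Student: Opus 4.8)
The plan is to establish the two implications separately; in both, the analytic engine is the fuzzy sum rule for \Fr\ subdifferentials, which by Fabian \cite{Fab89} is available \emph{precisely} in Asplund spaces.

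\emph{Sufficiency} ($X$ Asplund $\Rightarrow$ extremal principle). Fix $\varepsilon>0$ and, by extremality, perturbations $a_i$ with $\max_i\|a_i\|<\varepsilon$ and $\bigcap_{i=1}^n(\Omega_i-a_i)\cap B_\rho(\bx)=\es$. I would pass to the product space $X^n$ (a finite product of Asplund spaces is Asplund) and restate the hypothesis geometrically: the closed set $\Lambda:=\prod_{i=1}^n(\Omega_i-a_i)$ does not meet the diagonal $\Delta:=\{(x,\ldots,x):x\in X\}$ over the ball. The point $\mathbf{\bar a}:=(\bx-a_1,\ldots,\bx-a_n)$ lies in $\Lambda$ and satisfies $\dist(\mathbf{\bar a},\Delta)<\sqrt n\,\varepsilon$, so the lower semicontinuous function $\dist(\cdot,\Delta)+\delta_\Lambda$ has a $\sqrt n\,\varepsilon$-approximate minimizer there. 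Applying the \EVP\ produces $\hat{\mathbf x}\in\Lambda$, close to $(\bx,\ldots,\bx)$ and interior to the ball, that minimizes $\dist(\cdot,\Delta)+\delta_\Lambda+\lambda\|\cdot-\hat{\mathbf x}\|$ with $\lambda\approx\sqrt\varepsilon$. Crucially $\hat{\mathbf x}\notin\Delta$ (otherwise its common coordinate would lie in $\bigcap_{i=1}^n(\Omega_i-a_i)\cap B_\rho(\bx)$), so $\dist(\hat{\mathbf x},\Delta)>0$.

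\emph{Sufficiency (continued).} I would then invoke the fuzzy sum rule at $\hat{\mathbf x}$. The term $\partial^F\dist(\cdot,\Delta)$ contributes a norm-one functional $\mathbf u^*=(u_1^*,\ldots,u_n^*)$ lying in the annihilator $\Delta^\perp$, i.e.\ $\sum_{i=1}^n u_i^*=0$; the indicator contributes $N^F_\Lambda=\prod_i N^F_{\Omega_i-a_i}$, i.e.\ functionals $x_i^*\in N^F_{\Omega_i}(x_i)$ with $x_i:=\hat x_i+a_i\in\Omega_i$ near $\bx$; the penalty contributes a term of norm $\le\lambda$. The sum rule gives $x_i^*+u_i^*\approx 0$ componentwise, whence $\|\sum_{i=1}^n x_i^*\|\approx\|\sum_{i=1}^n u_i^*\|=0$ is small while $\sum_{i=1}^n\|x_i^*\|$ stays bounded below (as $\|\mathbf u^*\|=1$). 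Dividing through by $\sum_i\|x_i^*\|$ yields exactly the normalization of Definition \ref{D1.2}; rescaling the perturbation bound to $\varepsilon^2$ at the outset brings the $x_i$ inside $B_\varepsilon(\bx)$.

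\emph{Necessity} ($X$ non-Asplund $\Rightarrow$ extremal principle fails). Here I would argue by contraposition using the differentiability characterization of Asplund spaces: if $X$ is not Asplund there is a Lipschitz function $f$ on an open subset of (a separable subspace of) $X$ whose \Fr\ subdifferential \emph{and} superdifferential are empty at every point. Take $\Omega_1:=\epi f$ and $\Omega_2:=\hyp f$ in $X\times\R$; their intersection is the graph of $f$, and a vertical shift of $\Omega_1$ empties it, so the collection is extremal at any $(x_0,f(x_0))$. However the \Fr\ normal cones to $\epi f$ and $\hyp f$ at nearby boundary points are trivial, since a nonzero normal would produce a \Fr\ sub- or supergradient of $f$; hence no admissible family with $\sum_i\|x_i^*\|=1$ exists and fuzzy \Fr\ separation fails. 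Note that convex sets cannot serve as a counterexample: there the \Fr\ normal cone coincides with the normal cone of convex analysis and is nonempty, so separation always holds, and the construction is necessarily nonsmooth.

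\emph{Main obstacle.} The substantive difficulty is the necessity direction. Sufficiency is a careful but essentially mechanical combination of the \EVP\ with Fabian's fuzzy sum rule, the only delicate points being the choice of penalty parameter so that the normalization survives and the identification of $\partial^F\dist(\cdot,\Delta)$ with $\Delta^\perp$. Necessity, by contrast, rests on the deep functional-analytic fact that non-Asplund spaces carry Lipschitz functions with everywhere-degenerate \Fr\ sub/superdifferentials, together with the epigraphical nonsmooth calculus needed to certify that the resulting normal cones vanish; this is where essentially all of the content of Theorem \ref{T1.1} (originally due to Mordukhovich \& Shao \cite{MorSha96}) is concentrated.
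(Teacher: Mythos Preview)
The paper does not prove Theorem~\ref{T1.1}; it is stated in the Introduction as a known background result attributed to Mordukhovich \& Shao \cite{MorSha96} (see also \cite[Theorem~2.20]{Mor06.1}). So there is no ``paper's own proof'' to compare against. Your sketch follows the standard route found in those references: the sufficiency direction via the \EVP\ on the product space combined with the fuzzy sum rule is exactly the classical argument, and your necessity direction via the epigraph/hypograph construction is the one in \cite{MorSha96,Mor06.1}.

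One point worth tightening in the necessity argument: the characterization you invoke is not quite that there exists a Lipschitz function whose \Fr\ sub- and superdifferentials are \emph{both} empty \emph{at every point}. The relevant equivalence (see \cite{Fab89,Mor06.1}) is that $X$ is Asplund if and only if every lower semicontinuous function on $X$ has a nonempty \Fr\ subdifferential on a dense subset of its domain. In a non-Asplund space one therefore obtains a Lipschitz $f$ and a nonempty open set $U$ on which $\partial^F f\equiv\emptyset$; the construction then takes $\Omega_1:=\epi f$, $\Omega_2:=\cl(X\times\R\setminus\epi f)$ (rather than $\hyp f$), so that nontrivial \Fr\ normals to either set at boundary points over $U$ would force a \Fr\ subgradient of $f$, contradicting the choice of $U$. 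With that adjustment your outline matches the literature proof.
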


Recall that a Banach space is \emph{Asplund} if every continuous convex function on an open convex set is Fr\'echet differentiable on a dense subset \cite{Phe93}, or equivalently, if the dual of each its separable subspace is separable.
We refer the reader to \cite{Phe93,Mor06.1} for discussions
about
and
characterizations of Asplund spaces.
All reflexive, particularly, all finite dimensional Banach spaces are Asplund.
\begin{remark}
\label{R1.1}
It is not uncommon to employ in the statements of the extremal principle and its extensions an alternative form of the fuzzy \Fr\ separation property:
\smallskip

\leftskip=1cm
\rightskip=1cm
\noindent
\emph{for any $\eps>0$, there exist
$x_i\in\Omega_i\cap{B}_\eps(\bx)$ and vectors $x_i^*\in X^*$ ${(i=1,\ldots,n)}$ such that
$\sum_{i=1}^nd(x^*_i,N^F_{\Omega_i}(x_i))<\eps$,
$\sum_{i=1}^n x^*_i=0$ and
$\sum_{i=1}^{n}\norm{x_i^*}=1$.}
\smallskip

\leftskip=0cm
\rightskip=0cm

\noindent
The equivalence of the two forms of fuzzy separation is easy to check; see
\cite[Theorem~1]{BuiKru18}.
The fuzzy separation conditions similar to those in the Extremal principle and the one above have been used interchangeably in generalized separation statements for decades; cf. \cite{KruMor80,KruMor80.2,Kru81.2,Kru85.1, MorSha96,BorZhu05,Mor06.1,Mor06.2}.
\end{remark}

The fuzzy \Fr\ separation in the extremal principle is a necessary condition for extremality.
It is naturally valid under weaker assumptions.
The extremality property was relaxed to
\emph{stationarity} and \emph{approximate stationarity} \cite{Kru98,Kru02,Kru03,Kru04,Kru05,Kru06,Kru09}, while preserving the fuzzy \Fr\ separation conclusion, and without significant changes in the original proof.

\begin{definition}
[Approximate stationarity]
\label{D1.3}
The collection $\{\Omega_1,\ldots,\Omega_ n\}$ is
approximately stationary at $\bx\in\cap_{i=1}^n\Omega_i$ if for any $\varepsilon>0$, there exist $\rho\in(0,\varepsilon)$, $x_i \in \Omega_i \cap B_\eps(\bx)$, and vectors $a_i\in X$ $(i=1,\ldots,n)$ such that
$\max_{1\le i\le n}\|a_i\|<\varepsilon\rho$ and
$\bigcap_{i=1}^n(\Omega_i-x_i-a_i)\cap(\rho \mathbb{B})=\emptyset$.
\end{definition}

In the particular case when $x_1=\ldots=x_n=\bx$ in Definition~\ref{D1.3}, the last non-intersection condition takes a simpler form: $\bigcap_{i=1}^n(\Omega_i-a_i)\cap B_\rho(\bx)=\emptyset$.
The corresponding property is referred to in \cite{Kru98,Kru02,Kru03,Kru04,Kru05,Kru06,Kru09} as simply \emph{stationarity}.
The latter stronger property is obviously implied by the extremality in Definition~\ref{D1.1}.
When the sets are convex, all the properties in Definitions~\ref{D1.1} and \ref{D1.3} are equivalent; cf. \cite[Proposition~14]{Kru05}.

The approximate stationarity is sufficient for the fuzzy \Fr\ separation in the conclusion of the extremal principle.
In Asplund spaces, the two properties are actually equivalent.
This result is known as the {extended extremal principle} \cite{Kru98,Kru02,Kru03}.

\begin{theorem}
[Extended extremal principle]
\label{T1.2}
Let $\Omega_1,\ldots,\Omega_n$
be closed subsets of an Asplund space, and $\bx\in\cap_{i=1}^n\Omega_i$.
The collection $\{\Omega_1,\ldots,\Omega_n\}$ is approximately stationary at $\bx$ if and only if, for any $\varepsilon>0$, there exist $x_i\in \Omega_i\cap B_{\varepsilon}(\bx)$ and $x_i^*\in N^F_{\Omega_i}(x_i)$ $(i=1,\ldots,n)$ such that
$\|\sum_{i=1}^nx_i^*\|<\varepsilon$ and $\sum_{i=1}^n\|x_i^*\|=1$.
\end{theorem}

The necessity of the fuzzy separation for the approximate stationarity (hence, also for the extremality) can be easily extended to general Banach spaces if \Fr\ normal cones are replaced by other normal cones corresponding to subdifferentials possessing a satisfactory calculus in Banach spaces, e.g., Clarke or Ioffe normal cones; cf. \cite[Remark~2.1(iii)]{BuiKru19}.
The converse implication in Theorem~\ref{T1.2} is only true for \Fr\ normal cones.

The core feature of the definitions of extremality and stationarity is the non-inter\-section of certain perturbations
of either the original sets $\Omega_1,\ldots,\Omega_n$ in Definition~\ref{D1.1} or their translations $\Omega_1-x_1,\ldots,\Omega_n-x_n$ in Definition~\ref{D1.3}.
The perturbations employed in the definitions are in the form of linear translations of the sets: given a set $\Omega$, its linear translation is the set $\Omega-a$ for some vector $a$.

The conventional extremal principle as well as its extension formulated above, with perturbations in the form of linear translations, have proved to be versatile enough to cover a wide range of problems in optimization and variational analysis as demonstrated, e.g., in the books \cite{BorZhu05,Mor06.1,Mor06.2}.
At the same time, there exist problems, mainly in multiobjective and set-valued optimization described via closed
preference relations, that cannot be covered within the framework of linear translations.
The first example of this kind was identified in Zhu \cite{Zhu00}.
Fortunately, such problems can be handled with the help of a more flexible extended version of the extremal principle using more general nonlinear perturbations (deformations) of the sets defined by set-valued mappings.
Such an extension was developed in Mordukhovich et al. \cite{MorTreZhu03} (see also \cite{BorZhu05,Mor06.2}) and applied by other authors to various multiobjective optimization problems in \cite{ZheNg06,LiNgZhe07,Bao14.2}.
Below is our interpretation of the corresponding definitions from \cite{MorTreZhu03,Mor06.2} complying with the notation and terminology of Definitions~\ref{D1.1} and \ref{D1.3}.

\begin{definition}
[Extremality: set-valued perturbations]
\label{D1.4}
Let $\Omega_1,\ldots,\Omega_n$ be subsets of a normed space $X$, $\bx\in\cap_{i=1}^n\Omega_i$, and,
for each $i=1,\ldots,n$,
$S_i:M_i\toto X$ is a \SVM\ from a metric space
$(M_i,d_i)$ to $X$ and $S_i(\bar s_i)=\Omega_i$ for some $\bar s_i\in M_i$.
The collection $\{\Omega_1,\ldots,\Omega_n\}$
is extremal at $\bx$ with respect to $\{S_1,\ldots,S_n\}$ if there exists a $\rho\in(0,+\infty]$ such that, for any $\varepsilon>0$, there exist $s_i\in M_i$ $(i=1,\ldots,n)$ such that
$\max_{1\le i\le n}d_i(s_i,\bar s_i)<\varepsilon$,
$\max_{1\le i\le n}d(\bx,S_i(s_i))<\varepsilon$
and
$\bigcap_{i=1}^nS_i(s_i)\cap B_\rho(\bx){=\emptyset}$.
\end{definition}


Similar to Definitions~\ref{D1.1} and \ref{D1.3}, the model in Definition~\ref{D1.4} exploits non-inter\-section of perturbations (deformations) of the given sets $\Omega_1,\ldots,\Omega_n$.
The perturbations are chosen from the respective families of sets $\{S_i(s)\mid s\in M_i\}$
$(i=1,\ldots,n)$
determined by the given \SVM s $S_1,\ldots,S_n$.
In the particular case of linear translations, i.e., when, for all $i=1,\ldots,n$, $(M_i,d_i)=(X,d)$
and $S_i(a)=\Omega_i-a_i$ $(a_i\in X)$, we have $S_i(0)=\Omega_i$, and the property reduces to the one in Definition~\ref{D1.1}.
It was shown by examples in \cite{MorTreZhu03,Mor06.2} that in general the framework of Definition~\ref{D1.4} is richer than that of Definition~\ref{D1.1}.
With minor modifications in the proof, the conventional extremal principle can be extended to the setting of Definition~\ref{D1.4} producing a more advanced version of fuzzy separation; cf. \cite{MorTreZhu03,Mor06.2}.

\begin{theorem}
[Extremal principle: set-valued perturbations]
\label{T1.3}
Let $\Omega_1,\ldots,\Omega_n$ 
be subsets of an Asplund space $X$, $\bx\in\cap_{i=1}^n\Omega_i$, and,
for each $i=1,\ldots,n$,
$S_i:M_i\toto X$ be closed-valued, and $S_i(\bar s_i)=\Omega_i$ for some $\bar s_i\in M_i$, where
$(M_i,d_i)$ is a metric space.
Suppose that $\{\Omega_1,\ldots,\Omega_n\}$ is extremal at $\bx$ with respect to $\{S_1,\ldots,S_n\}$.
Then, for any $\varepsilon>0$, there exist $s_i\in M_i\cap B_{\eps}(\bar s_i)$, $x_i\in S_i(s_i)\cap B_\varepsilon(\bx)$, and $x_i^*\in N^F_{S_i(s_i)}(x_i)$ $(i=1,\ldots,n)$ such that
$\|\sum_{i=1}^nx_i^*\|<\varepsilon$ and $\sum_{i=1}^n\|x_i^*\|{=1}$.
\end{theorem}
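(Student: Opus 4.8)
The plan is to confine everything set-valued to a single opening step and then run the classical Asplund-space variational argument on the fixed closed sets it produces. So first I would \emph{strip off the mappings}: fix $\eps>0$ and apply the extremality hypothesis (Definition~\ref{D1.4}) with a sufficiently small auxiliary parameter $\eps'\in(0,\eps)$ to obtain $s_i\in M_i$ with $d_i(s_i,\bar s_i)<\eps'$ and $d(\bx,S_i(s_i))<\eps'$ and $\bigcap_{i=1}^nS_i(s_i)\cap B_\rho(\bx)=\es$. Write $\Omega_i':=S_i(s_i)$; these are closed, $s_i\in B_\eps(\bar s_i)$, and one may pick $y_i\in\Omega_i'$ with $\norm{y_i-\bx}<\eps'$. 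From this point on the mappings $S_i$ are irrelevant: it remains to produce $x_i\in\Omega_i'\cap B_\eps(\bx)$ and $x_i^*\in N^F_{\Omega_i'}(x_i)$ with $\norm{\sum_{i=1}^nx_i^*}<\eps$ and $\sum_{i=1}^n\norm{x_i^*}=1$. In other words, the theorem reduces to a fuzzy \Fr\ separation for any finite family of closed sets that have no common point in $B_\rho(\bx)$ yet each pass within $\eps'$ of $\bx$.

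Next I would \emph{set up a minimization on the product space}. Equip $X^n$ (which is Asplund, being a finite product of Asplund spaces) with the norm $\norm{(u_1,\ldots,u_n)}=\max_{1\le i\le n}\norm{u_i}$, whose dual norm on $(X^*)^n$ is $(x_1^*,\ldots,x_n^*)\mapsto\sum_{i=1}^n\norm{x_i^*}$. Let $D=\set{(z,\ldots,z)\mid z\in X}$ be the diagonal, let $\iota_{\Omega'}$ be the indicator of the product $\Omega':=\Omega_1'\times\cdots\times\Omega_n'$ (equal to $0$ on $\Omega'$ and $+\infty$ elsewhere), and minimize $\varphi(u):=d(u,D)$ over $\Omega'$. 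The point $(y_1,\ldots,y_n)$ shows $\inf_{\Omega'}\varphi\le\max_i\norm{y_i-\bx}<\eps'$, whereas $\varphi(u)=0$ would force all components to coincide at a common point of $\bigcap_i\Omega_i'$; for $u$ near $(\bx,\ldots,\bx)$ such a point lies in $B_\rho(\bx)$, which is excluded. Hence $\varphi$ stays strictly positive on the relevant part of $\Omega'$.

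Then I would \emph{apply Ekeland and the fuzzy sum rule}. The \EVP\ applied to $\varphi+\iota_{\Omega'}$ yields a point $\hat u$ close to $(\bx,\ldots,\bx)$, with $\varphi(\hat u)>0$ (so $\hat u\notin D$), that minimizes $\varphi+\iota_{\Omega'}+\la\norm{\cdot-\hat u}$ for some $\la$ that can be made arbitrarily small by shrinking $\eps'$. The \Fr\ subdifferential fuzzy sum rule (valid in the Asplund space $X^n$) then produces nearby points and subgradients $\zeta=(z_1^*,\ldots,z_n^*)\in\partial^F\varphi(v)$ at some $v$ near $\hat u$, together with $w_i^*\in N^F_{\Omega_i'}(x_i)$ for some $x_i\in\Omega_i'$ near $\hat u_i$, satisfying $w_i^*=-z_i^*$ up to a residual that tends to $0$ with $\la$. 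Two structural facts about $\varphi=d(\cdot,D)$ close the argument: since $\varphi$ is constant along the diagonal, $\zeta$ annihilates it, i.e. $\sum_iz_i^*=0$; and since $\hat u\notin D$, the subgradient of a distance function off its set has dual norm $1$, i.e. $\sum_i\norm{z_i^*}=1$. Consequently $\sum_iw_i^*$ and $\sum_i\norm{w_i^*}-1$ are both small, and after the positive rescaling $x_i^*:=w_i^*\big/\sum_j\norm{w_j^*}$ (which preserves membership in the cones $N^F_{\Omega_i'}(x_i)$) one gets exactly $\sum_i\norm{x_i^*}=1$ together with $\norm{\sum_ix_i^*}<\eps$, while $x_i\in\Omega_i'\cap B_\eps(\bx)$.

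The \textbf{main obstacle} I anticipate is the quantitative bookkeeping tying the four small parameters together: the auxiliary $\eps'$, the Ekeland radius, the fuzzy-sum-rule tolerance, and the final rescaling must be coordinated so that (a)~$\hat u$ remains in $B_\rho(\bx)^n$, which is the \emph{only} place the non-intersection (hence extremality) is used and what guarantees $\varphi(\hat u)>0$; (b)~the extracted $x_i$ still lie in $B_\eps(\bx)$; and (c)~the residuals are dominated so that normalization delivers $\norm{\sum_ix_i^*}<\eps$. The conceptual crux is the pair of properties of the distance-to-diagonal function---annihilation of the diagonal and unit subgradient norm off the diagonal---since these are exactly what convert a single approximate-minimality inclusion into the two separation conditions; everything genuinely new about set-valued perturbations is absorbed in the first step, which is why only minor modifications of the classical proof are needed.
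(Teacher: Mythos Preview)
Your proposal is correct and follows the classical route (essentially the Mordukhovich--Shao argument): freeze the perturbation sets, minimize the distance to the diagonal on the product of the closed sets, apply Ekeland and the Asplund fuzzy sum rule, then read off the two separation conditions from the subdifferential structure of the distance-to-diagonal function. The bookkeeping you flag is routine.

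Note, however, that the paper does not give its own proof of Theorem~\ref{T1.3}; it quotes it from \cite{MorTreZhu03,Mor06.2} and then subsumes it under the more general Theorem~\ref{T1.4}, which is obtained from the generalized separation Theorem~\ref{T5.1} via Corollaries~\ref{C3.1}, \ref{C3.20} and~\ref{C3.2}. The engine in Theorem~\ref{T5.1} is the same Ekeland-plus-sum-rule scheme you use, but with the \emph{asymmetric} auxiliary function $f_1(u_1,\ldots,u_n)=\max_{1\le i\le n-1}\|u_i-u_n\|$ in place of your $d(\cdot,D)$; the local ball $B_\rho(\bx)$ is handled not by keeping the Ekeland point inside it (as you do) but by adjoining $\rho\overline\B$ as an extra $(n{+}1)$st set in Corollary~\ref{C3.1}. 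This buys the paper separate control of $\de$ and $\eta$ in \eqref{T3.1-2} and the additional primal--dual relation \eqref{T5.4-3}, features irrelevant for Theorem~\ref{T1.3} itself but needed for their sharper downstream statements. For the specific result you are proving, the two approaches are equivalent; yours is the more direct one.
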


In the particular case of linear translations, the conclusion of Theorem~\ref{T1.3} reduces to the fuzzy \Fr\ separation in the Extremal principle.
The fuzzy separation was formulated in \cite{MorTreZhu03,Mor06.2} in a slightly different but equivalent form as commented on in Remark~\ref{R1.1}.
The statement of Theorem~\ref{T1.3} in its current form cannot be extended beyond Asplund spaces, and an analogue of Theorem~\ref{T1.1} holds true for the extremal principle in Theorem~\ref{T1.3}; cf. \cite[Theorem~5.68]{Mor06.2}.

In this paper, we refine the model in Definition~\ref{D1.4} and Theorem~\ref{T1.3} making it more flexible and, at the same time, simpler.
Instead of the \SVM s $S_1,\ldots,S_n$, we consider collections of families of arbitrary sets and prove a corresponding extremal principle.
The resulting more general model is applicable to a wider range of variational problems, weakens their assumptions and streamlines the proofs.

As mentioned above, the core feature of Definitions~\ref{D1.1}, \ref{D1.3} and \ref{D1.4} as well as all existing definitions of extremality and stationarity in the literature is the non-inter\-section of certain perturbations of the given sets.
Accordingly, all existing proofs of the extremal principle and its extensions first establish necessary conditions for non-inter\-section of arbitrary collections of sets.
It was first observed by Zheng \& Ng in \cite{ZheNg05.2} that such necessary conditions can be of interest on their own as powerful tools for proving various `extremal' results.
They formulated two abstract non-inter\-section lemmas and demonstrated their applications in optimization.
This fruitful idea has been further developed in \cite{ZheNg06,LiNgZhe07,LiTanYuWei08} culminating in a \emph{unified separation theorem} for closed sets in a Banach space in \cite{ZheNg11}.
The approach has been analyzed and refined in \cite{BuiKru18,BuiKru19}.
In this paper, we formulate a slightly more refined generalized separation result.


Our basic notation is standard, see, e.g., \cite{Mor06.1,RocWet98,DonRoc14,Iof17}.
Throughout the paper, if not explicitly stated otherwise, $X$ and $Y$ are  normed spaces.
Products of normed spaces are assumed to be equipped with the maximum norm.
The topological dual of a normed space $X$ is denoted by $X^*$, while $\langle\cdot,\cdot\rangle$ denotes the bilinear form defining the pairing between the two spaces.
The open ball with center $x$ and radius $\delta>0$ is denoted by $B_\delta(x)$.
The open unit ball is denoted by $\B$ with a subscript indicating the space, e.g., $\B_X$ and $\B_{X^*}$.
Symbols $\R$ and $\N$ stand for the sets of all, respectively, real and positive integer numbers.

The interior and closure of a set $\Omega$ are denoted by $\Int\Omega$ and $\cl\Omega$, respectively.
The distance from a point $x \in X$ to a subset $\Omega\subset X$ is defined by $d(x,\Omega):=\inf_{u \in \Omega}\|u-x\|$, and we use the convention $d(x,\emptyset)=+\infty$.
Given a subset $\Omega$ of a normed space $X$ and a point $\bx\in \Omega$, the sets
\begin{gather}\label{NC}
N_{\Omega}^F(\bx):= \Big\{x^\ast\in X^\ast\mid
\limsup_{\Omega\ni x{\rightarrow}\bar x,\;x\ne\bx} \frac {\langle x^\ast,x-\bx\rangle}
{\|x-\bx\|} \le 0 \Big\},
\\\label{NCC}
N_{\Omega}^C(\bx):= \left\{x^\ast\in X^\ast\mid
\ang{x^\ast,z}\le0
\qdtx{for all}
z\in T_{\Omega}^C(\bx)\right\}
\end{gather}
are the \emph{Fr\'echet} and \emph{Clarke normal cones} (cf. \cite{Kru03,Cla83}) to $\Omega$ at $\bx$,
where $T_{\Omega}^C(\bx)$
stands for the \emph{Clarke tangent cone} to $\Omega$ at $\bx$:
\begin{multline*}
T_{\Omega}^C(\bx):= \big\{z\in X\mid
\forall x_k{\rightarrow}\bx,\;x_k\in\Omega,\;\forall t_k\downarrow0,\;\exists z_k\to z\\
\mbox{such that}\quad
x_k+t_kz_k\in \Omega \qdtx{for all}
k\in\N\big\}.
\end{multline*}
The sets \eqref{NC} and \eqref{NCC} are nonempty
closed convex cones satisfying $N_{\Omega}^F(\bx)\subset N_{\Omega}^C(\bx)$.
If $\Omega$ is a convex set, they reduce to the normal cone in the sense of convex analysis:
$N_{\Omega}(\bx):= \left\{x^*\in X^*\mid \langle x^*,x-\bx \rangle \leq 0 \qdtx{for all} x\in \Omega\right\}$.

Structure of the paper.
In Section~\ref{S3}, we formulate a generalized separation result for a certain $(\eps,\rho)$-extre\-mality property of a collection of sets.
It is used in Section~\ref{S4} to prove an extremal principle for collections of families of arbitrary sets.

\section{Generalized separation}\label{S3}

The next abstract generalized separation statement is a refined version of \cite[Theorem 6.3]{BuiKru19}, which in turn refines several statements of this kind in the literature, in particular, the \emph{unified separation theorems} due to Zheng and Ng \cite[Theorems~3.1 and 3.4]{ZheNg11}.\footnote{
The mentioned results are essentially equivalent (observed by the reviewer).}
The result is a consequence of the \EVP\ and the corresponding subdifferential sum rules, and covers the conventional extremal principle as well as its existing extensions.
The proof follows the standard procedure, and we omit it for brevity.
The other dual conditions in the rest of the paper are consequences of this lemma.

\begin{lemma}
[Generalized separation]
\label{T5.1}
Let $\Omega_1,\ldots,\Omega_n$ 
be closed subsets of a Banach space $X$,
$\omega_i\in\Omega_i$ $(i=1,\ldots,n)$, and
$\eps>0$.
Suppose that
$\bigcap_{i=1}^n\Omega_i=\emptyset$ and
\begin{gather}
\label{T3.1-1}
\max_{1\le i\le n-1} \|\omega_i-\omega_n\| <\inf_{u_i\in\Omega_i\;(i=1,\ldots,n)}\max_{1\le i\le n-1}\|u_i-u_n\| +\varepsilon.
\end{gather}
Then, for any $\de>0$ and $\eta>0$, there exist $x_i\in\Omega_i$, $x_i^*\in X^*$ $(i=1,\ldots,n)$ such that
\begin{gather}
\label{T3.1-2}
\max_{1\le i\le n-1}\|x_i-\omega_i\|<\de,\quad
\|x_n-\omega_n\|<\eta,\quad
\sum_{i=1}^n x_i^*=0,\quad
\sum_{i=1}^{n-1}\|x_i^*\|=1,
\\
\notag
\de\sum_{i=1}^{n-1} d\left(x_i^*,N^C_{\Omega_i}(x_i)\right)+ \eta d\left(x_n^*,N^C_{\Omega_n}(x_n)\right) <\eps,
\\
\label{T5.1-2}
\sum_{i=1}^{n-1}\langle x_i^*,x_n-x_i\rangle=\max_{1\le i\le n-1} \|x_n-x_i\|.
\end{gather}

If $X$ is Asplund, then, for any $\de>0$, $\eta>0$ and $\tau\in(0,1)$, there exist $x_i\in\Omega_i$, $x_i^*\in X^*$ $(i=1,\ldots,n)$ satisfying \eqref{T3.1-2} and such that
\begin{gather}
\notag
\de\sum_{i=1}^{n-1} d\left(x_i^*,N^F_{\Omega_i}(x_i)\right)+ \eta d\left(x_n^*,N^F_{\Omega_n}(x_n)\right) <\eps,
\\
\label{T5.4-3}
\sum_{i=1}^{n-1}\langle x_i^*, x_n-x_i\rangle>\tau\max_{1\le i\le n-1} \|x_n-x_i\|.
\end{gather}
\end{lemma}

Lemma \ref{T5.1} characterizes the global non-intersection property $\bigcap_{i=1}^n\Omega_i=\emptyset$.
Interestingly, necessary conditions for local non-intersection properties follow from the global ones.
For what follows, we need a localized version of Lemma~\ref{T5.1} where each set $\Omega_i$ is considered near a given point $\bx_i$ $(i=1,\ldots,n)$.
It follows from Lemma~\ref{T5.1} applied to the collection of $n+1$ closed set $\Omega_i':=\Omega_i-\bx_i$ $(i=1,\ldots,n)$ and $\Omega_{n+1}':=\eta\overline\B$, where $\eta\in(0,\rho)$ is arbitrarily close to $\rho$, and the points $\omega_i':=\omega_i-\bx_i\in\Omega_i'$ $(i=1,\ldots,n)$ and
$\omega_{n+1}':=0$.

\begin{corollary}
\label{C3.20}
Let $\Omega_1,\ldots,\Omega_n$ 
be closed subsets of a Banach space $X$, $\omega_i\in\Omega_i$, $\bx_i\in X$ $(i=1,\ldots,n)$,
$\eps>0$ and $\rho>0$.
Suppose that
${\bigcap}_{i=1}^n(\Omega_i-\bx_i)\cap (\rho\B)=\emptyset$ and
\begin{gather}
\label{C3.2-10}
\max_{1\le i\le n}\|\bx_i-\omega_i\| <\inf_{u\in\rho\B}\;\max_{1\le i\le n} d(\bx_i +u,\Omega_i)+\varepsilon.
\end{gather}
Then, for any $\de>0$, there exist
$x_i\in\Omega_i$,
$x_i^*\in X^*$ $(i=1,\ldots,n)$ and {$x\in X$} such that
\begin{gather}
\label{C3.1-2}
\max_{1\le i\le n}\|x_i-\omega_i\|<\de,\quad
{\|x\|<\rho},\quad
\sum_{i=1}^{n}\|x_i^*\|=1,
\\
\label{C3.1-3}
\de\sum_{i=1}^n d\left(x_i^*,N^C_{\Omega_i}(x_i)\right)+ \rho\Big\|\sum_{i=1}^nx_i^*\Big\| <\eps,
\\
\label{C3.2-2}
\sum_{i=1}^{n}\langle x_i^*,x+\bx_i-x_i\rangle=\max_{1\le i\le n} \|x+\bx_i-x_i\|.
\end{gather}
If $X$ is Asplund, then, for any $\de>0$ and $\tau\in(0,1)$, there exist $x_i\in\Omega_i$,
$x_i^*\in X^*$ $(i=1,\ldots,n)$ and {$x\in X$} such that condition \eqref{C3.1-2} is satisfied, and
\begin{gather}
\notag
\de\sum_{i=1}^n d\left(x_i^*,N^F_{\Omega_i}(x_i)\right)+ \rho\Big\|\sum_{i=1}^nx_i^*\Big\| <\eps,
\\
\label{C3.2-3}
\sum_{i=1}^{n}\langle x_i^*,x+\bx_i-x_i\rangle >\tau\max_{1\le i\le n}\|x+\bx_i-x_i\|.
\end{gather}
\end{corollary}

\begin{remark}
\label{R3.1}
\begin{enumerate}
\item
The expression
$
\inf\limits_{u_i\in\Omega_i\;(i=1,\ldots,n)} \max\limits_{1\le i\le n-1}\|u_i-u_n\|
$
in the \RHS\ of \eqref{T3.1-1} corresponds to taking $p=+\infty$
in the definition of the \emph{$p$-weighted nonintersect index} \cite[p.~890]{ZheNg11} of $\Omega_1,\ldots,\Omega_n$:
\sloppy
\begin{gather*}
\ga_p(\Omega_1,\ldots,\Omega_n):=
\inf_{u_i\in\Omega_i\;(i=1,\ldots,n)} \left(\sum_{i=1}^{n-1}\|u_i-u_n\|^p\right)^{1/p}.
\end{gather*}
It is easy to see that $\ga_p(\Omega_1,\ldots,\Omega_n)=0$ when $\bigcap_{i=1}^n\Omega_i\ne\emptyset$.
We restrict the presentation to the case $p=+\infty$ for simplicity.

\item
\label{R3.1.4}
When applying Lemma~\ref{T5.1} and Corollary~\ref{C3.20}, it can be convenient to replace \eqref{T3.1-1} and \eqref{C3.2-10} by  simpler (and slightly stronger) conditions $\max_{1\le i\le n-1} \|\omega_i-\omega_n\| <\varepsilon$ and $\max_{1\le i\le n} \|\bx_i-\omega_i\|<\varepsilon$, respectively.

\item
Conditions \eqref{T5.1-2} and \eqref{T5.4-3} relate the dual space vectors $x_i^*$ and the primal space vectors $x_n-x_{i}$ $(i=1,\ldots,n-1)$.
Such conditions, though not common in the conventional formulations of the extremal principle/generalized separation statements, seem to provide important additional characterizations of the properties.
Conditions of this kind first appeared explicitly in the generalized separation theorems in \cite{ZheNg11}, where the authors also provided
motivations for
employing such conditions.
\item
As $\tau$ in the second parts of Lemma~\ref{T5.1} and Corollary~\ref{C3.20} can be taken arbitrarily close to $1$, conditions \eqref{T5.4-3} and \eqref{C3.2-3} are approximate versions of conditions \eqref{T5.1-2} and \eqref{C3.2-2}, respectively.

\item
\label{R3.1.6}
Conditions \eqref{T5.1-2}, \eqref{T5.4-3}, \eqref{C3.2-2} and \eqref{C3.2-3} can be dropped from the statements of Lemma~\ref{T5.1} and Corollary~\ref{C3.20} (together with the number $\tau$ in the Asplund space parts) leading to more traditional (though weaker) necessary conditions.
This observation is applicable to all the consequences of these statements in the remainder of the paper.
\end{enumerate}
\end{remark}

\section{Extremal principle for a collection of families of sets}\label{S4}

In this section, we revisit and refine the extremality model with set-valued perturbations in Definition~\ref{D1.4} and Theorem~\ref{T1.3} making it simpler and more flexible.

Definition~\ref{D1.4} talks about extremality of a collection of sets but in fact it is about certain properties of a collection of \SVM s
$S_i:M_i\toto X$ $(i=1,\ldots,n)$, loosely connected with the given sets.
For each $i=1,\ldots,n$ and each $s_i\in M_i$, the set $S_i(s_i)$ is considered as a perturbation (deformation) of the set $S_i(\bar s_i)$, where $\bar s_i$ is a given point in $M_i$.
However, with no continuity assumptions on $S_i$ in the definition, the sets $S_i(s_i)$ and $S_i(\bar s_i)$ can be very different as illustrated by examples in \cite{MorTreZhu03,Mor06.2}.
The only essential requirement on perturbations imposed by the definition is that the distance from $S_i(s_i)$ to a single given point in $\cap_{i=1}^n S_i(\bar s_i)$ must be small.
Another restriction on $\max_{1\le i\le n}d_i(s_i,\bar s_i)$ does not seem to be of importance in Definition~\ref{D1.4} and Theorem~\ref{T1.3}.
Below we provide another simple example in $\R$.

\begin{example}
\label{E4.1}
Let $\Omega_1=\Omega_2:=\R$.
The pair $\{\Omega_1,\Omega_2\}$ is clearly neither
extremal at any point in $\R$ in the conventional sense of Definition~\ref{D1.1}
nor even approximately stationary in the sense of Definition~\ref{D1.3}.
Now set $M_1=M_2:=\R$, $S_1(0)=S_2(0):=\R$, $S_1(x):=\{0\}$ if $x\ne0$, $S_2(x):=\{x\}$ if $x\in\{1/n\mid n\in\N\}$, and $S_2(x):=\es$ elsewhere on $\R$.
Thus, $S_1(0)=\Omega_1$, $S_2(0)=\Omega_2$, while the other values of $S_1$ and $S_2$ have little to do with $\Omega_1$ and $\Omega_2$, respectively.
At the same time, $0\in S_1(x)$, $d(0,S_2(1/n))=1/n$ and $S_1(x)\cap S_2(1/n)=\es$ for all $x\ne0$ and $n\in\N$.
Since $1/n\to0$ as $n\to+\infty$, the pair $\{\Omega_1,\Omega_2\}$ is extremal at $0$ with respect to $\{S_1,S_2\}$ in the sense of Definition~\ref{D1.4} (even with $\rho=+\infty$).
\end{example}

One can conclude from the above observations that within the theory behind Definition~\ref{D1.4} and Theorem~\ref{T1.3} it suffices to examine ``extremality'' of the families of sets $\Xi_i:=\{S_i(s)\mid s\in M_i\}$ $(i=1,\ldots,n)$.
We are going to hone this theory further and drop the  \SVM s $S_i:M_i\toto X$ together with metric spaces $M_i$ $(i=1,\ldots,n)$ entirely, and study extremality of families $\Xi_i$ $(i=1,\ldots,n)$ of arbitrary subsets of $X$.
%
We make no assumptions on the number of members in $\Xi_i$: it can range from a single set to an infinite (possibly uncountable) number of sets.
This simplifies the theory and makes it applicable to a wider range of problems.

\begin{definition}
[Extremality and stationarity: families of sets]
\label{D4.1}
Let $\Xi_1,\ldots,\Xi_n$ be families of subsets of a normed space $X$, and $\bx\in X$.
\begin{enumerate}
\item
\label{D4.1.1}
The collection $\{\Xi_1,\ldots,\Xi_ n\}$ is
extremal at $\bx$ if there is a $\rho\in(0,+\infty]$ such that, for any $\varepsilon>0$, there exist $A_i\in\Xi_i$ $(i=1,\ldots,n)$ such that $\max_{1\le i\le n}d(\bx,A_i)< \varepsilon$ and
$\bigcap_{i=1}^nA_i\cap B_\rho(\bx)=\emptyset$.
\item
\label{D4.1.2}
Let $\Omega_1,\ldots,\Omega_n\subset X$.
The collection $\{\Xi_1,\ldots,\Xi_ n\}$ is
approximately stationary at $\bx$ with respect to $\{\Omega_1,\ldots,\Omega_n\}$ if, for any $\varepsilon>0$, there exist a $\rho\in(0,\varepsilon)$, $x_i\in\Omega_i\cap B_{\varepsilon}(\bx)$ and  $A_i\in\Xi_i$ $(i=1,\ldots,n)$ such that
$\max_{1\le i\le n}d(x_i, A_i)<\varepsilon\rho$ and
$\bigcap_{i=1}^n(A_i-x_i)\cap (\rho\B)=\emptyset$.
\item
\label{D4.1.3}
The collection $\{\Xi_1,\ldots,\Xi_ n\}$ is
stationary at $\bx$ if it is
approximately stationary at $\bx$ with respect to $\{\{\bx\},\ldots,\{\bx\}\}$.
\end{enumerate}
\end{definition}

\begin{remark}
\label{R3}
\begin{enumerate}
\item
\label{R3.01}
It is easy to see that \eqref{D4.1.1} \folgt \eqref{D4.1.3} in Definition~\ref{D4.1} and, if $\bx\in\cap_{i=1}^n\Omega_i$, then \eqref{D4.1.3} \folgt \eqref{D4.1.2}.
\item
\label{R3.02}
Definition~\ref{D4.1} covers the extremality and stationarity properties in Definitions~\ref{D1.1}, \ref{D1.3} and \ref{D1.4}.
The first two correspond to setting $\Xi_i:=\{\Omega_i-a\mid a\in X\}$ in parts \eqref{D4.1.1} and \eqref{D4.1.2}, respectively, while setting
$\Xi_i:=\{S_i(s)\mid s\in M_i\}$ in part \eqref{D4.1.1} covers the last one.
Note that, unlike Definitions~\ref{D1.1}, \ref{D1.3} and \ref{D1.4} which are about extremality and stationarity properties of a given collection of sets $\Omega_1,\ldots,\Omega_n$, Definition~\ref{D4.1} considers a more general setting of a given collection $\Xi_1,\ldots,\Xi_n$ of families of sets.

\item
Unlike the setting of Definition~\ref{D1.4}, we do not assume in Definition~\ref{D4.1} that $\Omega_i\in\Xi_i$ $(i=1,\ldots,n)$.
\end{enumerate}
\end{remark}

\begin{example}
Revisiting Example~\ref{E4.1}, we consider two families of subsets of $\R$.
Let $\Xi_1$ consist of a single one-point set $\{0\}$,  and $\Xi_2$ be a family of singletons $\{1/n\}$ for $n\in\N$.
It is easy to see that
$\{\Xi_1,\Xi_2\}$ is extremal at $0$ in the sense of Definition~\ref{D4.1}\,\eqref{D4.1.1} with $\rho=+\infty$.
\end{example}

\if{
The more general extremality and stationarity model in Definition~\ref{D4.1} leads to a more general and simpler version of fuzzy separation than that in Theorem~\ref{T1.3}.

\begin{definition}
[Fuzzy separation: general perturbations]
Let $\Xi_1,\ldots,\Xi_n$ be families of subsets of a normed space $X$, and $\bx\in X$.
The collection $\{\Xi_1,\ldots,\Xi_n\}$
satisfies the fuzzy \Fr\ separation property at $\bx$ if,
for any $\varepsilon>0$, there exist
$A_i\in\Xi_i$, $x_i\in A_i\cap B_\varepsilon(\bx)$ and $x_i^*\in N^F_{A_i}(x_i)$ $(i=1,\ldots,n)$ such that
$\|\sum_{i=1}^nx_i^*\|<\varepsilon$ and $\sum_{i=1}^n\|x_i^*\|=1$.

If the above property holds true with $N^C$ in place of $N^F$, we say that $\{\Xi_1,\ldots,\Xi_n\}$
satisfies the fuzzy Clarke separation at $\bx$.
\end{definition}
}\fi

The next theorem establishes a more general and simpler version of fuzzy separation than that in Theorem~\ref{T1.3}.

\begin{theorem}
\label{T4.5}
Let
$\Xi_1,\ldots,\Xi_n$ be families of closed subsets of a Banach space $X$, and $\bx\in X$.
Let $\Omega_1,\ldots,\Omega_n\subset X$.
If $\{\Xi_1,\ldots,\Xi_n\}$ is approximately stationary at $\bx$ with respect to $\{\Omega_1,\ldots,\Omega_n\}$,
then, for any $\varepsilon>0$ and $\tau\in(0,1)$, there exist $\bx_i\in\Omega_i\cap B_{\eps}(\bx)$, $A_i\in\Xi_i$,
$x_i\in A_i\cap B_\varepsilon(\bx)$, $x_i^*\in N^C_{A_i}(x_i)$ $(i=1,\ldots,n)$ and $x\in\varepsilon\B$, such that
$\|\sum_{i=1}^nx_i^*\|<\eps$, $\sum_{i=1}^n\|x_i^*\|=1$,
and condition \eqref{C3.2-3} is satisfied.

If $X$ is Asplund, then $N^C$ in the above assertion can be replaced by $N^F$.
\end{theorem}

\begin{proof}
Suppose $\{\Xi_1,\ldots,\Xi_n\}$ is approximately sta\-tionary at $\bx$ with respect to $\{\Omega_1,\ldots,\Omega_n\}$.
Let $\eps>0$ and $\tau\in(0,1)$.
Choose a positive number $\xi<\min\{\eps/4,(1-\tau)/2\}$.
By Definition~\ref{D4.1}\,\eqref{D4.1.2}, there exist a $\rho\in(0,\xi^2)$, $\bx_i\in\Omega_i\cap B_{\xi^2}(\bx)$ (hence, $\bx_i\in\Omega_i\cap B_{\eps}(\bx)$), and $A_i\in\Xi_i$ $(i=1,\ldots,n)$ such that
$\max_{1\le i\le n}d(\bx_i,A_i)<\xi^2\rho$ and
$\bigcap_{i=1}^n(A_i-x_i)\cap (\rho\B)=\emptyset$.
Hence, there exist points $\omega_i\in A_i$ $(i=1,\ldots,n)$ such that $\max_{1\le i\le n} \|\bx_i-\omega_i\|<\xi^2\rho$.
Set $\eps':=\xi^2\rho$ and $\de:=\xi^2\rho^{\frac12}$.
By Corollary~\ref{C3.20} with Remark~\ref{R3.1}\,\eqref{R3.1.4} in mind, there exist points $x_i\in A_i\cap B_{\de}(\omega_i)$,
$x_i'^*\in X^*$ $(i=1,\ldots,n)$ and $x\in\rho\B$ (hence, $x\in\eps\B$) such that $\sum_{i=1}^n \|x_i'^*\|=1$ and conditions \eqref{C3.1-3} and \eqref{C3.2-2} are satisfied with $A_i$, $x_i'^*$ $(i=1,\ldots,n)$ and $\eps'$ in place of $\Omega_i$, $x_i^*$ $(i=1,\ldots,n)$ and $\eps$.
Thus,
\begin{gather*}
\|x_i-\bx\|\le\|x_i-\omega_i\|+\|\omega_i-\bx_i\| +\|\bx_i-\bx\| <\de+\xi^2\rho+\xi^2<\xi^3+\xi^4+\xi^2<3\xi<\eps,
\\
\sum_{i=1}^n d\big(x_i'^*,N^C_{A_i}(x_i)\big) <\frac{\eps'}{\de}= \rho^{\frac12}<\xi,\quad \Big\|\sum_{i=1}^nx_i'^*\Big\|<\frac{\eps'}{\rho}=\xi^2,
\\
\sum_{i=1}^n\langle x_i'^*, x+\bx_i-x_i\rangle=\max_{1\le i\le n} \|x+\bx_i-x_i\|.
\end{gather*}
There exist $z_i^*\in N^C_{A_i}(x_i)$ $(i=1,\ldots,n)$ such that $\sum_{i=1}^n\|x_i'^*-z_i^*\|<\xi$.
Thus,
\begin{gather*}
0<1-\xi<\sum_{i=1}^n\|z_i^*\|<1+\xi\AND \Big\|\sum_{i=1}^nz_i^*\Big\|<\xi^2+\xi<2\xi.
\end{gather*}
Set $x_i^*:=z_i^*/\sum_{j=1}^n\|z_j^*\|$ $(i=1,\ldots,n)$.
Then $x_i^*\in N^C_{A_i}(x_i)$ $(i=1,\ldots,n)$,
\begin{gather*}
\sum_{i=1}^n\|x_i^*\|=1\AND \Big\|\sum_{i=1}^nx_i^*\Big\| <\frac{2\xi}{1-\xi}<4\xi<\eps.
\end{gather*}
Moreover,
\begin{align*}
\sum_{i=1}^n\|x_i^*-x_i'^*\|&\le\sum_{i=1}^n \Big\|\frac{z_i^*}{\sum_{j=1}^n\|z_j^*\|}-z_i^*\Big\| +\sum_{i=1}^n\|z_i^*-x_i'^*\|
\\&
=\Big|\sum_{j=1}^n\|z_j^*\|-1\Big| +\sum_{i=1}^n\|z_i^*-x_i'^*\|<\xi+\xi=2\xi,
\end{align*}
and consequently,
\begin{align}
\notag
\sum_{i=1}^n\langle x_i^*, x+\bx_i-x_i\rangle&>\sum_{i=1}^n\langle x_i'^*, x+\bx_i-x_i\rangle- 2\xi\max_{1\le i\le n} \|x+\bx_i-x_i\|
\\&=
\label{T4.5P10}
(1-2\xi)\max_{1\le i\le n}\|x+\bx_i-x_i\| >\tau\max_{1\le i\le n}\|x+\bx_i-x_i\|.
\end{align}
This proves \eqref{C3.2-3}.

Suppose $X$ is Asplund, and
let $\tau'\in(\tau+2\xi,1)$.
Application of the second part of Corollary~\ref{C3.20} with $\tau'$ in place of $\tau$ in the above proof justifies $\|\sum_{i=1}^nx_i^*\|<\eps$ and $\sum_{i=1}^n\|x_i^*\|=1$ with $x_i^*\in N^F_{A_i}(x_i)$ $(i=1,\ldots,n)$, while the factor $1-2\xi$ in \eqref{T4.5P10} needs to be replaced by $\tau'-2\xi$ leading to the same final estimate.
This again proves \eqref{C3.2-3}.
\end{proof}

\begin{remark}
\begin{enumerate}
\item
The necessary conditions are formulated in Theorem~\ref{T4.5} for the approximate stationarity.
In view of Remark~\ref{R3}\,\eqref{R3.01}, if $\bx\in\cap_{i=1}^n\Omega_i$, they are also valid for the stationarity and extremality.
\item
Dropping the number $\tau$ together with the points $\bx_i$ $(i=1,2\ldots,n)$ and condition \eqref{C3.2-3} from the statement of Theorem~\ref{T4.5} produces an even simpler (though weaker) version of fuzzy separation; cf. Remark~\ref{R3.1}\,\eqref{R3.1.6}.

\item
Theorem~\ref{T4.5} shows that approximate stationarity of a given collection of families of closed sets implies its fuzzy (up to $\eps$) separation.
Note that, unlike the model discussed in \cite{BuiKru19}, not only the points $x_i$ and $x_i^*$ $(i=1,\ldots,n)$ usually involved in fuzzy separation statements depend on $\eps$, but also the perturbation sets $A_1,\ldots,A_n$.

\item
In the particular case when $\Xi_1,\ldots,\Xi_n$ are families of linear translations of the given sets
$\Omega_1,\ldots,\Omega_n$, i.e., $\Xi_i:=\{\Omega_i-a\mid a\in X\}$ (see Remark~\ref{R3}\,\eqref{R3.02}), the fuzzy separation in Theorem~\ref{T4.5} can be reformulated in terms of the sets
$\Omega_1,\ldots,\Omega_n$ instead of $A_1,\ldots,A_n$.
It suffices to observe that condition $x_i\in A_i$ in this case is equivalent to $x_i':=x_i+a_i\in\Omega_i$, while $N^C_{A_i}(x_i)=N^C_{\Omega_i}(x_i')$ and
$N^F_{A_i}(x_i)=N^F_{\Omega_i}(x_i')$.
Thus, Theorem~\ref{T4.5} covers the conventional extremal principle and its many extensions; cf. \cite{BuiKru18,BuiKru19}.
When the families of perturbations are defined by set-valued mappings (see Remark~\ref{R3}\,\eqref{R3.02}), Theorem~\ref{T4.5} enhances the approximate
extremal principle in \cite[Theorem~4.1]{MorTreZhu03}; see also \cite[Theorem~5.68\,(b)]{Mor06.2}.

\item
A more general quantitative version of the approximate stationarity property can be of interest.
Given an $\al>0$, one can replace inequality $\max_{1\le i\le n}d(x_i, A_i)<\varepsilon\rho$ in Definition~\ref{D4.1}\,\eqref{D4.1.2} by
$\max_{1\le i\le n}d(x_i, A_i)<\al\rho$ and talk about \emph{approximate $\al$-stationarity}; cf. \cite[Definition~3.1]{BuiKru19}.
The proof of Theorem~\ref{T4.5} can be easily adjusted to this setting producing a fuzzy separation result with inequality $\|\sum_{i=1}^nx_i^*\|<\eps$ replaced by $\|\sum_{i=1}^nx_i^*\|<\be$ where $\be$ is an arbitrary number greater than $\al$.

\item
Theorem~\ref{T4.5} can be `reversed' into a statement providing dual characterizations of the absence of the approximate stationarity, which can be interpreted as a kind of \emph{transversality} of collections of families of sets.
Such properties
play an important role in constraint qualifications, qualification conditions in subdifferential/normal cone/coderivative calculus and convergence analysis of computational algorithms \cite{Kru05,Kru06,Kru09,KruLop12.1,KruTha13,KruTha15, KruLukTha18, LewLukMal09,DruIofLew15, Iof17}.
\end{enumerate}
\end{remark}

Theorem~\ref{T4.5} serves as a powerful tool for establishing optimality conditions for a broad class of set-valued optimization problems with general preference relations.
A simplified example of such a problem with a single geometric constraint can be written in the form:
\begin{gather}
\label{P}
\tag{$P$}
\text{minimize }\;F(x)\quad \text{subject to }\; x\in\Omega.
\end{gather}
Here $F:X\toto Y$ is a \SVM\ between normed spaces, $\Omega\subset X$, and the \emph{preference} is determined by a subset $K\subset Y$.

Optimality in problem \eqref{P} can be interpreted as (a kind of) extremality of the pair $\Omega_1:=\gph F$ and $\Omega_2:=\Omega\times K$ in $X\times Y$.
Given a nonempty family $\Xi$ of subsets of $Y$ (representing `perturbations' of $K$), one can define two families of subsets of $X\times Y$: $\Xi_1:=\{\Omega_1\}$ (a single set) and $\Xi_2:=\{\Omega\times\widetilde K\mid\widetilde K\in\Xi\}$ and apply the theory developed above.
The preference set $K$ can be given, e.g., as $K:=L(\by)\cup\{\by\}$, where $L:Y\toto Y$ is an abstract \emph{level-set mapping} (see \cite{KhaTamZal15}), and $\by$ is a fixed point in $Y$.
The details can be found in \cite{CuoKruTha}, where more general than \eqref{P} constrained set-valued optimization problems are studied.

\section*{Disclosure statement}

No potential conflict of interest was reported by the authors.

\section*{Funding}

Nguyen Duy Cuong is supported by Vietnam National Program for the Development of Mathematics 2021-2030 under grant number B2023-CTT-09.

\section*{Acknowledgments}

A part of the work was done during Alexander Kruger's stay at the Vietnam Institute for Advanced Study in Mathematics in Hanoi.
He is grateful to the Institute for its hospitality and supportive environment.

The authors thank the reviewer for his/her careful reading of the manuscript, detailed analysis of the approach and very specific comments and suggestions which led to significant changes in the manuscript improving its readability.

\addcontentsline{toc}{section}{References}

\end{document}